\documentclass[11pt, reqno, psamsfonts]{amsart}
\usepackage{graphicx}
\usepackage[backref,breaklinks]{hyperref}
\hypersetup{colorlinks, linkcolor={blue!60!black}, citecolor={blue!60!black}, urlcolor=cyan}
\usepackage[noabbrev,capitalize,nameinlink]{cleveref}
\usepackage{amsmath,amssymb,amsthm, bm}

\usepackage{placeins}
\usepackage{pgfplots}
\usepackage{csquotes} 
\pgfplotsset{compat=1.18}
\usepackage{xcolor}
\usepackage[ruled,vlined]{algorithm2e}
\usepackage{mathtools}
\usetikzlibrary{decorations.pathreplacing}

\theoremstyle{plain}
\newtheorem{theorem}{Theorem}[section]
\newtheorem*{theorem*}{Theorem}
\newtheorem{lemma}[theorem]{Lemma}
\newtheorem*{lemma*}{Lemma}
\newtheorem*{corollary*}{Corollary}

\newtheorem*{conjecture*}{Conjecture}

\newtheorem{corollary}[theorem]{Corollary}
\theoremstyle{definition}
\newtheorem{definition}[theorem]{Definition}

\theoremstyle{remark}
\newtheorem*{remark*}{Remark}

\newtheorem{claim}[theorem]{Claim}

\newtheorem*{question*}{Question}

\title{New Upper bounds on the Mondrian Art Problem}
\author{Thomas Garrison}
\thanks{Thomas Garrison: University of Florida,  \texttt{tgarrison2@ufl.edu}}

\author{Chris Seiler}
\thanks{Chris Seiler:  \texttt{cseiler@alumni.cmu.edu}}

\author{Aliaksei Semchankau}
\thanks{Aliaksei Semchankau: Carnegie Mellon University. \texttt{asemchan@andrew.cmu.edu}}

\begin{document}
\begin{abstract}
We present a new upper bound on the defect of the Mondrian Art Problem. The Mondrian Art Problem asks for a partition of an $n \times n$ square with rectangles of distinct dimensions such that the difference (defect) between the largest and smallest rectangle areas is minimized. We prove that for any $n \times n$ square, there exists a partition with defect $O(n^{5/6})$, improving upon the previously conjectured $O (n/\log n)$ upper bound. We also implement an algorithm that provides empirical evidence supporting our theoretical bound. 
\end{abstract}
\maketitle
\section{Introduction}\label{sec:intro}
Piet Mondrian's art has inspired many works across mathematics and beyond. The Mondrian art problem appears to have originated in recreational-mathematics circles around 2015--2016 \cite{Pegg2016Mondrian, Bassen2016FurtherII}, popularized by a Numberphile video featuring Gordon Hamilton. In the Mondrian Art Problem we want to partition an $n \times n$ square with distinct dimension rectangles such that the difference in area between the smallest and largest area rectangles is minimized (we count $m \times n$ and $n \times m$ as the same rectangle and not distinct ones). We define the \textit{defect} as the difference between the area of the smallest and largest rectangles in the partition. See \cref{fig:mondrian15} for an example of an optimal partition.
 
\begin{figure}[h]
    \centering
    \includegraphics[width=.8\linewidth]{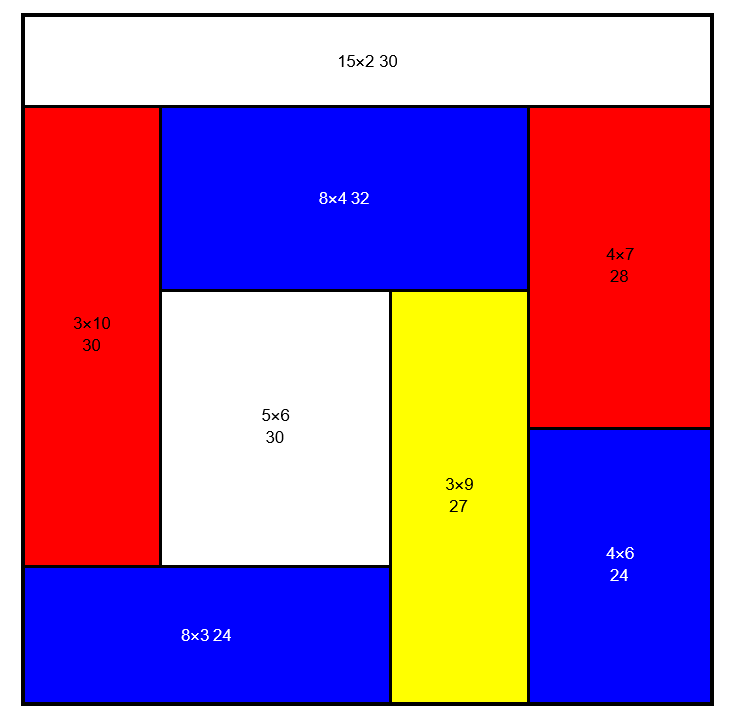}
    \caption{Mondrian partition of $15 \times 15$. This partition has a defect $32-24=8$ from Wolfram Demonstrations Project \enquote{Mondrian Art Problem} \cite{Pegg2016Mondrian}}
    \label{fig:mondrian15}
\end{figure}
Much computational work has been done on the Mondrian Art Problem, including calculating the minimum defect for squares of size $n=3-57$ \cite{OEIS:A276523, Bassen2016FurtherII}. Another big computation result in a different direction is a backtracking algorithm showing no square of sidelength less than or equal to 1000 can have a defect zero \cite{garciacolin2023perfectmondrianpartitionsquares}. Finally, a third approach to this problem uses number theory to show a lower bound on the density of values of $n$ with nonzero defects \cite{okuhn2018mondrianpuzzleconnectionnumber}. 
 
It was thought from computer experimentation that the best upper bound on the defects of an $n\times n$ square was $O(\frac{n}{\log n})$ \cite{OEIS:A276523}. We prove a polynomial improvement over the past upper bound.
 
\subsection{Organization}
This paper is organized as follows. In the second section, we prove an upper bound via our algorithm that yields a defect of $O(n^{\frac{5}{6}})$, with the implementation in the third section. 
 
\section{Proof of the upper bound}\label{sec:proof}
\begin{definition}
For $n \in \mathbb{N}$, a \textit{Mondrian partition} of a $n\times n$ square is a partition of the $n\times n$ square into disjoint, non-overlapping rectangles, with vertices that are integer coordinates such that each rectangle in the partition has distinct integer dimensions.
\end{definition}
In our proof of \cref{thm:main}, we plan to find a strip tiling of the $n \times n$ grid. A strip tiling consists of first partitioning the square into vertical strips, then tiling each strip with rectangles. The way we do this is, first, to show that the widths of all our strips sum to $n$ if we pick widths $w_i=\Theta(n^{1/2})$. Then we pick distinct heights of size $h_i=\Theta(n^{2/3})$ varying by $\Theta(n^{1/3})$. This gives us rectangles of area $a_i=\Theta(n^{7/6})$ with a defect of $d=\Theta(n^{5/6})$ as claimed. We invoke \cref{lem:subset} twice, once to show that the widths sum to $n$ and again to show that the heights will sum to $n$.
\begin{theorem}\label{thm:main}

There exists $n_0$ such that for $n\geq n_0$, there exists a Mondrian partition of the $n\times n$ square with defect  $d\leq 10 n^\frac{5}{6}$.

\end{theorem}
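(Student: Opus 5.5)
We use a strip tiling. First we cut the $n\times n$ square into vertical strips of distinct widths $w_1<\dots<w_s$, all of order $n^{1/2}$. Then we cut each strip of width $w$ into rectangles $w\times h$ with distinct heights $h$, all of order $n^{2/3}$. Fix a target area $A=n^{7/6}$ and window length $d=10n^{5/6}$. A strip of width $w$ may only use heights in
\[
I_w=\bigl[\lceil A/w\rceil,\ \lfloor (A+d)/w\rfloor\bigr],
\]
so every rectangle has area in $[A,A+d]$, and the defect is at most $d$.

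\textbf{Distinctness.} Two rectangles in different strips have different widths. Two rectangles in the same strip have different heights. A coincidence $w\times h = h'\times w'$ with the roles of the sides swapped is impossible, because every width is at most $2n^{1/2}$ while every height is at least $A/(2n^{1/2})=\tfrac12 n^{2/3}$, and $\tfrac12 n^{2/3}>2n^{1/2}$ for large $n$. So it suffices to find the widths and, for each strip, the heights.

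\textbf{Step 1 (widths).} Apply \cref{lem:subset} to the consecutive integers in $[n^{1/2},2n^{1/2}]$. I take the lemma to say, in the form we need, that a set of consecutive integers $\{L,\dots,R\}$ has, for each $k$, all integers between the sum of its $k$ smallest and its $k$ largest elements as sums of $k$ distinct elements. Consecutive $k$-ranges overlap once $k$ is not too small. The total sum of the interval is about $\tfrac32 n\ge n$, so $n$ is a sum of distinct widths in $[n^{1/2},2n^{1/2}]$, and we get the vertical strips.

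\textbf{Step 2 (heights).} Fix a strip of width $w\le 2n^{1/2}$.
\begin{itemize}
\item Write $L=\lceil A/w\rceil$ and $R=\lfloor (A+d)/w\rfloor$. Then $L\le n^{2/3}$ (as $w\ge n^{1/2}$) and $R-L\ge d/w-2\ge 5n^{1/3}-2$.
\item The sum of all of $I_w$ is roughly $(d/w)(A/w)=10n^2/w^2\ge \tfrac52 n>n$, so $n$ lies below the total.
\item The $k$-subset sums of $I_w$ fill the interval $[kL+\binom k2,\ kR-\binom k2]$.
\item The $k$-range and the $(k+1)$-range overlap as soon as $k(R-L-k)\gtrsim L$. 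For $k$ near $n/L$, which lies between about $n^{1/3}$ and $2n^{1/3}$, we have $k(R-L-k)\ge n^{1/3}\cdot 3n^{1/3}\cdot(1-o(1))$. This exceeds $L$, at least with room to spare in the constants for large $n$.
\end{itemize}
So the ranges for consecutive relevant $k$ chain together and cover $n$. By \cref{lem:subset}, $n$ is a sum of distinct heights from $I_w$. Tiling each strip this way completes the construction, with defect at most $d=10n^{5/6}$.

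\textbf{Main obstacle.} I expect the hard part to be checking, with explicit constants, the overlap condition in Step 2 uniformly over all widths $w\in[n^{1/2},2n^{1/2}]$. The worst case is $w=2n^{1/2}$, where the height interval is shortest, about $5n^{1/3}$, yet we need about $2n^{1/3}$ of its elements. If the constants $1$ and $2$ for the width range turn out too tight against the factor $10$, I would first narrow the width range to $[n^{1/2},\tfrac32 n^{1/2}]$, whose total sum still exceeds $n$, before touching $d$.
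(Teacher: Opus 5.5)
Your proof is correct and builds the same strip tiling as the paper: distinct widths of order $n^{1/2}$, heights of order $n^{2/3}$ from a window of length of order $n^{1/3}$, and widths below heights for distinctness. Your estimates for $R-L$, for the total of $I_w$, and for $k(R-L-k)$ at $k\approx n/L$ all check out. The genuine difference is the subset-sum tool, and you should not attribute it to \cref{lem:subset}. That lemma's statement concerns sums $mk+r$ from a symmetric window $\{m-\ell,\dots,m+\ell\}$ under the hypothesis $s\le k\le 2\ell-s$. Its proof absorbs the residue with elements $m+a$, $a\le s$, and then adds $m$ and pairs $m\pm b$; it does not give the full range of $k$-element sums of an interval. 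Moreover, \cref{cor:range}, the form in which the paper uses it, cannot handle your narrowest strips. For $w\approx n^{1/2}$, every symmetric window inside $I_w$ has center $m\ge n^{2/3}$, so the threshold $m(\sqrt{2m}+1)$ exceeds $\sqrt2\,n>n$.

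What you actually use is the classical fact that the $k$-element subset sums of $\{L,\dots,R\}$ are exactly the integers in $[kL+\binom{k}{2},\,kR-\binom{k}{2}]$, so prove it. Start from the $k$ smallest elements and repeatedly replace some $x$ in the current set by $x+1$, where $x+1\le R$ and $x+1$ is not in the set. Such an $x$ exists until the set is the $k$ largest elements, and each move raises the sum by exactly $1$. You can also drop the chaining. Let $k^*$ be maximal with $k^*L+\binom{k^*}{2}\le n$. Then $n<(k^*+1)L+\binom{k^*+1}{2}$, and your inequality $k^*(R-L-k^*)\ge L$ gives $n<k^*R-\binom{k^*}{2}$.

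As for what each route buys: the paper's lemma is explicit and algorithmic (it is what \textsc{RangeSubsetSum} implements) and yields a clean residue-independent range. However, absorbing an arbitrary residue can cost about $\sqrt{2m}$ tiles. That loss is the source of the paper's constraint $\sqrt2(c/c_1)^{3/2}\le 1$, which with $c=1$ excludes widths near $n^{1/2}$. Your interval fact is sharp: it also reaches targets just below a multiple of $m$, and needs only $k(R-L-k)\ge L$ at one $k\approx n/L$. So the wider width range $[n^{1/2},2n^{1/2}]$ goes through with room to spare, and the exponent and the constant $10$ come out the same. Finally, the fallback in your last paragraph is unnecessary and also wrong: the integers in $[n^{1/2},\tfrac32 n^{1/2}]$ sum to only about $\tfrac58 n<n$.
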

\begin{lemma}\label{lem:triangle}
   For any number $r$ in the set $\{0,1, \ldots, s(s + 1)/2\}$ there is a subset of $\{0,1, \ldots, s\}$ that sums to $r$.
\end{lemma}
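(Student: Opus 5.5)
We argue by induction on $s$, with the triangular number $T_s = s(s+1)/2 = 0+1+\cdots+s$ as the threshold. For $s=0$ the only target is $r=0$, and the empty subset (or $\{0\}$) works. Now suppose the statement holds for $s-1$, so every $r \in \{0,1,\ldots,T_{s-1}\}$ is a subset sum of $\{0,1,\ldots,s-1\}$. Take $r \in \{0,1,\ldots,T_s\}$; we split into two cases.

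\emph{Case $r \le T_{s-1}$.} The induction hypothesis gives a subset of $\{0,\ldots,s-1\} \subseteq \{0,\ldots,s\}$ summing to $r$.

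\emph{Case $T_{s-1} < r \le T_s$.} Put $r' = r - s$. Then
\[
r' \le T_s - s = T_{s-1},
\]
and $r' > T_{s-1} - s = (s-1)(s-2)/2 - 1 \ge -1$, so $r' \ge 0$. By the induction hypothesis some subset $A \subseteq \{0,\ldots,s-1\}$ sums to $r'$. Since $s \notin A$, the set $A \cup \{s\}$ is a subset of $\{0,\ldots,s\}$ with sum $r' + s = r$.

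The only step needing care is checking that $r - s$ lands in the range $[0, T_{s-1}]$ covered by the induction hypothesis. The upper bound is the identity $T_s - s = T_{s-1}$. For the lower bound, the two cases overlap whenever $s \le T_{s-1}+1$, and for small $s$ one can also just note that the case $r \ge s$ gives $r - s \ge 0$ directly while $r < s$ falls in the first case.

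A direct alternative is the greedy algorithm: repeatedly subtract the largest unused element not exceeding the remaining target. This also works, but the induction is cleaner, so we use it.
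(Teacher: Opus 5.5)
Your induction is correct, including the lower bound $r-s > (s-1)(s-2)/2-1 \ge -1$, which holds because $(s-1)(s-2)$ is a product of consecutive integers and hence nonnegative. The paper gives no argument beyond citing $\sum_{i=1}^s i = s(s+1)/2$. That formula by itself only shows that the top value $T_s$ is attained, by the whole set. Your proof supplies the missing interpolation, and its key step, $T_s - s = T_{s-1}$, is exactly what that formula provides.

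The route the paper actually relies on in practice is the greedy one you mention at the end. \textsc{BasicSubsetSum} in the appendix scans $i = s, s-1, \ldots, 1$ and takes $i$ whenever it fits. Its correctness, which the paper calls straightforward, rests on the invariant that after processing $i$ the remaining target is at most $T_{i-1}$. This uses $T_i - i = T_{i-1}$ when $i$ is taken and $i-1 \le T_{i-1}$ when it is not.

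So the two arguments differ only in when they commit to $s$. You take $s$ only when forced ($r > T_{s-1}$); greedy takes it whenever $s \le r$. The greedy version gives the $O(s)$ constructive routine used in the implementation, while your induction is the tidier existence proof.

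Your closing paragraph can be deleted. The two cases as you defined them are disjoint, so speaking of them ``overlapping'' is confusing. The lower bound is already settled for every $s \ge 1$ by the inequality above.
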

This follows from the triangle number formula $\sum_{i=1}^s i=\frac{s(s+1)}{2}$.
\begin{lemma}\label{lem:subset}
Let $m, \ell$ be positive integers such that $m > \ell \geq \sqrt{2m}+1$. Fix $r \in \{0, 1, \ldots, m-1\}$ and let $s$ be the smallest positive integer such that $s(s + 1)/2 \geq r$. Then for any integer $k$ with $s \leq k \leq 2\ell - s$ there exists a subset of $\{m-\ell, \ldots, m+\ell\}$ summing to $mk+r$.
\end{lemma}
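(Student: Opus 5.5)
We use the shape $mk+r$: a subset of $k$ elements from $\{m-\ell,\ldots,m+\ell\}$ sums to $mk$ plus the sum of its offsets in $\{-\ell,\ldots,\ell\}$. So it suffices to find $k$ \emph{distinct} integers in $\{-\ell,\ldots,\ell\}$ whose sum is exactly $r$. (The standing assumption $m>\ell$ keeps every $m+j$ positive, though only distinctness of the offsets matters.)

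We build the offsets in two blocks.
\begin{itemize}
\item \textbf{The $r$-block.} By \cref{lem:triangle}, since $r\le s(s+1)/2$, there is a set $A\subseteq\{0,1,\ldots,s\}$ with sum $r$. The lemma allows $0$ in the set, so we may pad $A$ with $0$ if needed. The resulting $|A|$ lies between $0$ and $s+1$ and is not under our control.
\item \textbf{The zero-sum filler.} We complete $A$ to exactly $k$ elements using symmetric pairs $\{-j,j\}$ with $j$ outside $\{0,\ldots,s\}$, i.e.\ $s<j\le\ell$. Each such pair adds $0$ to the sum. Adding $p$ pairs and possibly the element $0$ gives $k=|A|+2p$ or $|A|+2p+1$.
\end{itemize}

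The count needs a small repair, because the parity of $k-|A|$ may not match what the filler can supply when $0$ is already used. In that case we replace a pair $\{-j,j\}$ by a zero-sum triple. Alternatively, we adjust $A$ itself: $A\cup\{0\}$ versus $A\setminus\{0\}$ changes $|A|$ by one without changing the sum. Since $0\in\{0,\ldots,s\}$, we can always choose whether $0$ belongs to $A$, so $|A|$ can be taken to be either of two consecutive values.

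We then count what is reachable.
\begin{itemize}
\item There are $\ell-s$ available pairs.
\item The smallest achievable size is about $|A|\le s+1$. We can shrink $A$ by using larger elements, e.g.\ a greedy choice of distinct elements from $\{1,\ldots,s\}$, which needs roughly $\sqrt{2r}\le s$ elements.
\item The largest achievable size is about $|A|+2(\ell-s)$.
\end{itemize}
So we need $s\le k\le 2\ell-s$ to fall within these bounds. The upper end works if $|A|\ge s$, which we arrange by choosing $A$ with many small elements. Specifically, $\{0,1,\ldots,s\}$ minus a single element, or minus two elements, realizes sums near $s(s+1)/2$ with $|A|\approx s$.

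The main obstacle is the two ends of the range of $k$:
\begin{itemize}
\item At $k=s$, we need $A$ to have at most $s$ elements.
\item At $k=2\ell-s$, we need $|A|+2(\ell-s)\ge 2\ell-s$, i.e.\ $|A|\ge s$.
\end{itemize}
Handling these requires flexibility in $|A|$. The first attempt is to prove that every $r\le s(s+1)/2$ admits representations in $\{0,\ldots,s\}$ of every size in a range containing both $s-1$ and $s$. This should hold by a swapping argument: increment one element at a time to move between sizes.

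If that fails near the top end, the fallback is to allow filler pairs to use values inside $\{1,\ldots,s\}$ that $A$ avoids. The hypothesis $\ell\ge\sqrt{2m}+1>s$ ensures $s<\ell$, so the range $\{s+1,\ldots,\ell\}$ is nonempty and the construction fits inside $\{-\ell,\ldots,\ell\}$.
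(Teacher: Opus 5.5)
Your reduction (choose $k$ distinct offsets in $\{-\ell,\ldots,\ell\}$ summing to $r$, split into an $r$-block plus zero-sum filler) is the same skeleton the paper uses. However, the proposal never commits to a choice of $A$ for which both endpoints $k=s$ and $k=2\ell-s$ are actually checked, and that check is the whole content of the lemma. Take your filler to be the pairs $\pm j$ with $s<j\le\ell$, plus an optional $0$. Then the reachable sizes are exactly $n_A,\ldots,n_A+2(\ell-s)+1$ with $n_A=|A\setminus\{0\}|$, so you need $n_A\in\{s-1,s\}$. Your ``first attempt'' to secure this fails for three reasons:
\begin{itemize}
\item It is asserted for every $r\le s(s+1)/2$, and there it is false: with $s=3$, $r=1$, no $3$-element subset of $\{0,1,2,3\}$ sums to $1$.
\item The swapping argument is only asserted, never given.
\item The phrase ``minus one or two elements'' is never shown to cover the relevant $r$.
\end{itemize}
The missing observation is the minimality of $s$. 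It forces $r\ge s(s-1)/2$, so $t:=s(s+1)/2-r\in\{0,\ldots,s\}$. Then $A:=\{1,\ldots,s\}\setminus\{t\}$ sums to $r$ and has exactly $s$ or $s-1$ elements. With this $A$, your count gives sizes $s,\ldots,2\ell-s+1$ or $s-1,\ldots,2\ell-s$, and the proof closes.

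The ``fallback'' in your last paragraph is precisely the paper's proof, and it removes the endpoint tension altogether. Take any $A\subseteq\{1,\ldots,s\}$ with sum $r$, so $|A|\le s$ for free. Use the pairs $\pm b$ for \emph{all} $b\in\{1,\ldots,\ell\}\setminus A$, together with $0$. These are distinct from $A$ and from each other, and there are $\ell-|A|$ pairs. So every size from $|A|$ to $2\ell-|A|+1$ is reachable, and this interval contains $[s,2\ell-s]$ because $|A|\le s$. No lower bound on $|A|$ is needed. Minimality of $s$ is then used only to get $s<\ell$, which keeps $A$ inside the offset range.
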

Before proving \cref{lem:subset}, let us prove a useful corollary.
\begin{corollary}\label{cor:range}
 For any number $N$ in the range $m(\sqrt{2m}+1) \leq N \leq 2m\ell - m(\sqrt{2m}+1)$ there is a subset $R_N \subseteq \{m - \ell, \ldots, m + \ell\}$ which sums up to $N$.    
\end{corollary}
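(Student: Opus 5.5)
We derive the corollary from \cref{lem:subset} by writing $N$ in base $m$. Given $N$ in the stated range, divide by $m$ to get $N = mk + r$ with $k = \lfloor N/m \rfloor$ and $r \in \{0,1,\ldots,m-1\}$. The task is then to show that $k$ satisfies $s \le k \le 2\ell - s$, where $s$ is the smallest positive integer with $s(s+1)/2 \ge r$. Once this holds, \cref{lem:subset} produces the subset $R_N \subseteq \{m-\ell,\ldots,m+\ell\}$ summing to $mk+r = N$. We take the standing hypothesis $m > \ell \ge \sqrt{2m}+1$ of \cref{lem:subset} as implicit in the corollary.

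The key estimate is an upper bound on $s$ that is uniform in $r$. Since $r \le m-1 < m$, it suffices to show that the integer $s_0 = \lceil \sqrt{2m} \rceil$ satisfies $s_0(s_0+1)/2 \ge m > r$. This holds because $s_0^2 \ge 2m$, so $s_0(s_0+1)/2 > s_0^2/2 \ge m$. By minimality of $s$ we get $s \le s_0 \le \sqrt{2m}+1$. The edge case $r = 0$, where $s = 1$, is also covered by this bound.

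With this bound the two inequalities on $k$ follow. For the lower bound, $N \ge m(\sqrt{2m}+1)$ gives $N/m \ge \sqrt{2m}+1$, and therefore $k = \lfloor N/m \rfloor \ge \lfloor \sqrt{2m}+1 \rfloor$. We need this to be at least $s$. Since $s$ is an integer and $s \le \lceil \sqrt{2m}\rceil \le \lfloor \sqrt{2m}\rfloor + 1 = \lfloor \sqrt{2m}+1\rfloor$, we obtain $s \le k$. For the upper bound, $N \le 2m\ell - m(\sqrt{2m}+1)$ gives $k \le N/m \le 2\ell - (\sqrt{2m}+1) \le 2\ell - s$.

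The main point needing care is exactly this interplay between floors, ceilings and the integer $s$. The bound $s \le \sqrt{2m}+1$ must be compared against $k = \lfloor N/m \rfloor$ rather than against $N/m$. The bound $s \le \lceil\sqrt{2m}\rceil \le \lfloor\sqrt{2m}+1\rfloor$ settles the lower side. The upper side needs no rounding at all, since $k \le N/m$. With both inequalities established, \cref{lem:subset} applies directly and the corollary follows.
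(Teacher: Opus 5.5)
Your proof is correct and follows the paper's route: write $N=mk+r$, use the two ends of the range to get $s\le k\le 2\ell-s$, and invoke \cref{lem:subset}. Your handling of rounding is more careful than the paper's, which asserts $s\le\sqrt{2m}$ (false for, e.g., $m=12$, $r=11$, where $s=5>\sqrt{24}$); your bound $s\le\lceil\sqrt{2m}\rceil$, combined with the integrality of $k$ on the lower side and with $s\le\sqrt{2m}+1$ on the upper side, closes that small gap.
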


\begin{proof}
    Write $N=mk+r$ with $r \in \{0,\ldots,m-1\}$. Let $s$ be minimal such that $s(s + 1)/2 \geq r$. $N\geq m(\sqrt{2m}+1)$. Dividing by $m$, we have $k+r/m\geq (\sqrt{2m}+1)$. Then we have $k\geq  (\sqrt{2m}+1)-\frac{r}{m} >  \sqrt{2m}\geq s$.
    For the upper bound, we have $N\leq 2m\ell - m(\sqrt{2m}+1)$. Dividing by $m$, we have $k\leq k+\frac{r}{m} \leq 2\ell-\sqrt{2m}-1\leq 2\ell-\sqrt{2m}\leq 2\ell-s$. Since we have both sides holding, we can use \cref{lem:subset} to conclude that we can find a subset summing to $N$. 
\begin{remark*}
The $+1$s in the range can be removed with a little more effort.
\end{remark*}
\end{proof}
The proof of \cref{lem:subset} is structured into two parts. In the first part, we want to get rid of the residue $r$ by adding up elements in the set $\{m + 1, \ldots, m + s\}$. In the second part of our proof we want the remaining tiles to add up to the number of multiples of $m$ we have left. This allows us to cover an entire row of the strip tiling.
\begin{proof}[Proof of \cref{lem:subset}]
  Part 1:\\
    Let us write $N = mk + r$, where $r \in \{0, \ldots, m - 1\}$.
  We have $s < \sqrt{2r}+1 \leq \sqrt{2m}+1$. We can see this because if $s$ is minimal, then $s(s - 1)/2 < r$. Then we have $(s-1)^2<s(s-1)<2r$ so $s<\sqrt{2r}+1$ and $r<m$ giving the desired inequalities.

    Let $A \subseteq \{1, \ldots, s\}$ be a subset that sums up to $r$ (assuming $r \neq 0$; otherwise we take the empty set). Setting $S_r := m + A = \{m + a \mid a \in A\}$ we have that the elements of this set sum to $m|A| + r$, where $k' := |A| \leq s$.
    Part 2:\\
    What remains is to find a set which sums up to $(mk + r) - (m|A| + r) = m(k - k')$, which we denote by $mk^{\prime\prime}$, $k^{\prime\prime} \in (0, 2\ell - s)$.
    Let $B := \{1, \ldots, \ell\} \setminus A = \{b_1 < b_2 < \ldots \}$. 
    It contains exactly $\ell - k' \geq \ell - s > 0$ numbers. The set $S_r$ contains no numbers of the form $m$ or $m \pm b$ with $b \in B$: its elements have the form $m + a$ with $a \in A$, and $B$ was chosen disjoint from $A$.
    We also notice that one can create all the multiples of $m$ up to $(2\ell - 2k' + 1)m$ as follows:
    \[
        \begin{split}
            m &= m, \\
            2m &= (m - b_1) + (m + b_1), \\
            3m &= (m - b_1) + m + (m + b_1), \\
            &\ldots \\
            (2\ell - 2k' + 1)m &= (m - b_1) + (m - b_2) + \ldots + m + \ldots + (m + b_2) + (m + b_1).
        \end{split}
    \]
Since $k' \leq s$ and $k < 2\ell - s$, we have $k+k'\leq 2\ell$, hence $k'' = k - k' \leq 2\ell - 2k' \leq 2\ell - 2k' + 1$.
    Therefore, there exists a set $S_m$ of numbers of the form $m$ and $m \pm b$, which sums up to $mk^{\prime\prime}$.
    
    Finally, setting $R_N := S_r \sqcup S_m$ we get
    \[
    \sum_{R_N} = \sum_{S_r} + \sum_{S_m} = (m|A| + r) + mk^{\prime\prime} = mk + r = N,
    \]
    which completes the proof.
\end{proof}
In our next proof, we prove \cref{thm:main}, showing that we can use this algorithm for tiling strips to partition the whole square and choose $O(n^\frac{7}{6})$ to be the size of our tiles and $O(n^\frac{5}{6})$ to be the size of our defect. 
\begin{figure}[h]
\centering
\begin{tikzpicture}[scale=0.5]
\draw[thick] (0,0) rectangle (12,12);
\draw[thick] (3,0) -- (3,12);
\draw[thick] (7,0) -- (7,12);
\draw[thick] (12,0) -- (12,12);
\draw[thick] (0,3.5) -- (3,3.5);
\draw[thick] (0,7.5) -- (3,7.5);
\draw[thick] (0,12) -- (3,12);
\draw[thick] (3,2.25) -- (7,2.25);
\draw[thick] (3,5) -- (7,5);
\draw[thick] (3,8.25) -- (7,8.25);
\draw[thick] (3,12) -- (7,12);
\draw[thick] (7,2.1) -- (12,2.1);
\draw[thick] (7,4.35) -- (12,4.35);
\draw[thick] (7,6.75) -- (12,6.75);
\draw[thick] (7,9.3) -- (12,9.3);
\draw[thick] (7,12) -- (12,12);
\node[below] at (1.5,-0.5) {$w_1$};
\node[below] at (5,-0.5) {$w_2$};
\node[below] at (9.5,-0.5) {$w_3$};
\node[right] at (12,1.2) {$h_{31}$};
\node[right] at (12,3.6) {$h_{32}$};
\node[right] at (12,6) {$h_{33}$};
\node[right] at (12,8.4) {$h_{34}$};
\node[right] at (12,10.8) {$h_{35}$};
\end{tikzpicture}
\caption{Example partition produced by the PartitionSquare algorithm. The width of each vertical strip is $\Theta(n^{1/2})$, and the heights within each strip are chosen so that all rectangle areas are approximately equal and have heights $\Theta(n^{2/3})$.}
\label{fig:partition-example}
\end{figure}
\begin{proof}[Proof of \cref{thm:main}]
In this proof we use four constants: $C$, the constant in front of the defect; $c$, the constant for the tile whose area we are approximating; $c_1, c_2$, the constants representing the smallest and largest strip widths.
Let $d := Cn^{5/6}$ be half of the max allowable defect ($C$ is permitted to be a large constant).

Let $T := cn^{7/6}$ be an area we will try to approximate ($c$ is a small constant). All areas will be in the $[T - d, T + d]$ range giving a defect of $2d$.

Now we partition the rectangle $n \times n$ into sub-rectangles $w_1 \times n$, $w_2 \times n$, $\ldots$ such that all $w_1, w_2, \ldots$ are distinct and come from the range $[c_1 n^{1/2}, \ldots, c_2 n^{1/2}]$. To make this partition cover the whole range, we just need 
\[
\sum_{c_1\sqrt{n}}^{c_2\sqrt{n}}w\geq n\ 
\Leftrightarrow\ 
\frac{c_2^2n}{2}-\frac{c_1^2n}{2}\geq n+o(n)\ 
\Leftrightarrow\ 
c_2^2 - c_1^2 \geq 2 + o(1).
\]

Each of the strips $w_i \times n$ we will partition into rectangles $w_i \times h_{i1}, w_i \times h_{i2}, \ldots$ in such a manner that  $w_i < h_{i1} < h_{i2} < \ldots$ holds, so as to ensure that all dimensions across different strips are distinct. Explicitly, take any two rectangles with dimensions $(w_i,h_{ij})$ and $(w_{i'},h_{i'j'})$; even if they have the same widths, their heights must be different so the rectangles have different dimensions. We also have $w_i \ll h_{ij}$ since $w_i=\Theta(n^{1/2})$ and $h_{ij}=\Theta(n^{2/3})$, so the heights and widths can never be equal, so we cannot have rectangles with height and width dimensions swapped. We also want all $w \times h_{ij}$ to have areas in $[T - d, T + d]$.

Given a strip $w_i \times n$, let us write $w = \alpha n^{1/2}$, $c_1 \leq \alpha \leq c_2$. 
Then we want all sides $h_{ij}$ to be in the interval $[T/w_i \pm d/w_i]$. In terms of the lemma above, we have $m = T / w_i \sim \frac{c}{\alpha}n^{2/3}, \ell = d / w_i \sim \frac{C}{\alpha}n^{1/3} $.

Notice, that as $T/w_i - d/w_i \gg n^{2/3} \gg n^{1/2}$, inequalities $w_i < h_{i1} < \ldots$ hold automatically.
\begin{claim}\label{clm:constants}
     We can choose constants $c,c_1,c_2,C$ such that all the numbers from $m\sqrt{2m}\leq n \leq 2m\ell - m\sqrt{2m}$ can be covered by the steps in our algorithm.   
\end{claim}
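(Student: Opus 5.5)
We need to choose the constants so that, for every strip width $w_i=\alpha n^{1/2}$ with $c_1\le\alpha\le c_2$, the target height $n$ lies in the window of \cref{cor:range} for $m=\lfloor T/w_i\rfloor$ and $\ell=\lfloor d/w_i\rfloor$. We must also keep the hypotheses of \cref{lem:subset} (namely $m>\ell\ge\sqrt{2m}+1$) and the width condition $c_2^2-c_1^2\ge 2+o(1)$. Once this holds, \cref{cor:range} gives a set of distinct heights in $\{m-\ell,\dots,m+\ell\}$ summing to $n$. Each height $h$ satisfies $|w_ih-T|\le w_i+d\le 2d$ after rounding, so we only need to track constants.

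The plan is to substitute the asymptotics $m\sim \frac{c}{\alpha}n^{2/3}$ and $\ell\sim\frac{C}{\alpha}n^{1/3}$ and compare exponents term by term.

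\textbf{Hypotheses of the lemma.} The bound $m>\ell$ is automatic, since $n^{2/3}\gg n^{1/3}$. For $\ell\ge\sqrt{2m}+1$ we need $\frac{C}{\alpha}n^{1/3}\ge \sqrt{2c/\alpha}\,n^{1/3}+O(1)$, i.e.\ $C^2\ge 2c\alpha$ with a little room. It therefore suffices to have $C^2>2cc_2$.

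\textbf{Lower end.} The lower end of the range is $m\sqrt{2m}\sim (c/\alpha)^{3/2}\sqrt2\, n$ (the $+m$ term is lower order). We need this to be at most $n$, i.e.\ $\sqrt2\,(c/c_1)^{3/2}<1$, which is a smallness condition on $c$ relative to $c_1$.

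\textbf{Upper end.} Here $2m\ell\sim \frac{2cC}{\alpha^2}n$, and subtracting $m\sqrt{2m}$ we need
\[
\frac{2cC}{\alpha^2}-\sqrt2\Big(\frac{c}{\alpha}\Big)^{3/2}>1 \quad\text{for all }\alpha\le c_2 .
\]
The worst case is $\alpha=c_2$, and since $C$ is free, taking $C$ large (say $C\ge c_2^2/c$) settles it.

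One concrete choice is $c_1=1$, $c_2=2$ (so $c_2^2-c_1^2=3>2$), $c=1/4$ (so $\sqrt2\,(1/4)^{3/2}<1$), and $C=16$, which satisfies both $C^2>2cc_2$ and the upper-end inequality with slack.

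The main obstacle is the bookkeeping of floors and lower-order terms: $w_i$ is an integer, $m$ and $\ell$ are rounded, and the corollary carries $+m$ terms. I would handle this by proving every inequality with a strict constant-factor margin, so that for $n\ge n_0$ the $O(n^{2/3})$ error terms are absorbed.

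A second point to check is the claimed bound $d\le 10n^{5/6}$ of \cref{thm:main}. The total defect is $2d=2Cn^{5/6}$, so with the constants above it is $32n^{5/6}$, which misses $10$. To reach $10$, I would re-optimize. The tight constraints are $c<c_1/2^{1/3}$ and $2cC\gtrsim c_2^2$, with $c_2^2\ge c_1^2+2$, so $C\approx c_2^2/(2c)$. Taking $c_1$ small makes $c$ small, which is bad. I would choose $c_1=1$, $c_2=\sqrt3$, and $c$ just below $0.79$, giving $C\approx 3/1.58\approx1.9$ plus the correction term, so that $2C<10$. I would verify this numerically and, if it fails, adjust $c_1$ and $c_2$ accordingly.
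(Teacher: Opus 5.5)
Your argument is correct and is essentially the paper's. Like the paper, you substitute $m\sim\frac{c}{\alpha}n^{2/3}$ and $\ell\sim\frac{C}{\alpha}n^{1/3}$, then impose $\sqrt2(c/c_1)^{3/2}\le 1\le \frac{2cC}{c_2^2}-\sqrt2(c/c_2)^{3/2}$ together with $c_2^2-c_1^2>2$; you also check the hypothesis $\ell\ge\sqrt{2m}+1$ of \cref{lem:subset} (i.e.\ $C^2>2cc_2$), which the paper omits. For the constant $10$ you need not re-optimize: the paper's choice $c=1$, $c_1=2$, $c_2=2.5$, $C=5$ satisfies all of these and gives $2C=10$, whereas your tentative $c_1=1$, $c_2=\sqrt3$ sits exactly on the boundary $c_2^2-c_1^2=2$, which must be strict to absorb the lower-order loss.
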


By \cref{cor:range}, we can pick heights for strips of width $w_i$ that sum to $n$ if $n$ lies in the range $[m(\sqrt{2m}+1), 2m\ell - m(\sqrt{2m}+1)]$. Since the $+1$ adds a term that is at most $O(n^{2/3})$, it is enough to show $n$ lies in $[m\sqrt{2m}, 2m\ell - m\sqrt{2m}]$ up to $o(n)$. Substituting in for $m =  \frac{c}{\alpha}n^{2/3}, \ell =  \frac{C}{\alpha}n^{1/3} $ this range becomes $ [\sqrt{2}c^{3/2}\alpha^{-3/2}n, (2Cc\alpha^{-2} - \sqrt{2}c^{3/2}\alpha^{-3/2})n]$. To have number $n = 1 \cdot n$ contained in this range we need inequalities
\[
\sqrt{2}\left(\frac{c}{c_1}\right)^{3/2} 
\leq 
1 
\leq 
2\frac{Cc}{c_2^2} - \sqrt{2}\left(\frac{c}{c_2}\right)^{3/2}
\]
and
\[
c_2^2 - c_1^2 \geq 2 + o(1)
\]
to hold.

We can do that. For instance, our inequalities are satisfied with constants $c = 1, c_1=2, c_2=2.5 ,  C = 5$.

This completes the proof.
\end{proof}
The number of tiles this algorithm uses is $\Theta (n^\frac{5}{6})$. 
\section{Implementation}\label{sec:impl}
The construction in \cref{sec:proof} lends itself to finding partitions of the $n \times n$ square efficiently. We provide an algorithm \textsc{PartitionSquare} which, given an integer $n$, produces a Mondrian partition of the $n \times n$ square with defect $O(n^{5/6})$.

Our main algorithm has a key component \textsc{RangeSubsetSum}, which given positive integers $m$ and a target value $N$, outputs a subset $S \subseteq [m-L, m+L] \cap \mathbb{N}$ such that $\sum_{x \in S}x = N$, for any positive integer value of $L$ satisfying $2m \sqrt m < N < 2m(L - \sqrt{2m})$ and $L < m$. In the Appendix, we provide \textsc{RangeSubsetSum} and the function it calls \textsc{BasicSubsetSum}. Here $c_0 = \frac{c_1+c_2}{2}$ denotes the median strip width constant, so that \textsc{RangeSubsetSum} returns distinct widths centered on $c_0\sqrt{n}$.

\begin{algorithm}
\SetAlgoLined
\KwIn{Parameter $n \in \mathbb{N}$}
\KwOut{A set of rectangles $R \subseteq (0, 1, 2, \dots, n)^4$, where each $(x_0, y_0, x_1, y_1) \in R$ denotes a rectangle with one corner at $(x_0, y_0)$ and the opposite corner at $(x_1, y_1)$}
\DontPrintSemicolon
\SetKwProg{Fn}{def}{}{}

\Fn{\normalfont \textsc{PartitionSquare}($n$)}{
  $R \gets \emptyset$\;
  $w_1, w_2, \dots, w_k \gets \textsc{RangeSubsetSum}(c_0 n^{1/2}, \text{target}=n)$\;
  \For{$i = 1$ \text{to} $k$}{
    $h_1, h_2, \dots, h_\ell \gets \textsc{RangeSubsetSum}(\frac{C n^{7/6}}{w_i}, \text{target}=n)$\;
    \For{$j = 1$ \text{to} $\ell$}{
       $x_0 \gets \sum_{r < i} w_r$\;
       $y_0 \gets \sum_{r < j} h_r$\;
       $x_1 \gets x_0 + w_i$\;
       $y_1 \gets y_0 + h_j$\;
       $R \gets R \cup \{ (x_0, y_0, x_1, y_1) \}$\;
    }
  }
  \Return{$R$}
}

\caption{\textsc{PartitionSquare}}
\end{algorithm}

Here, we have chosen the arbitrary constant $C$, as argued in the previous section, to be large enough to be valid for the input to \textsc{RangeSubsetSum}.

By \cref{thm:main},  every rectangle has area $Cn^{7/6}\pm O(n^{5/6})$, yielding defect $O(n^{5/6})$.

\begin{figure}[h!]
\centering
\begin{tikzpicture}
\begin{loglogaxis}[
    width=4in,
    height=3.2in,
    grid=major,
    xlabel={n},
    ylabel={Defects},
    legend pos=north west,
    tick label style={font=\small},
    label style={font=\small}
]

\addplot[only marks, mark=*, blue] coordinates {
(1000000000,495038016)
(2000000000,881782200)
(3000000000,1236662700)
(4000000000,1571044050)
(5000000000,1891385574)
(6000000000,2200712392)
(7000000000,2503172080)
(8000000000,2798502112)
(9000000000,3089255286)
(10000000000,3371595346)
(11000000000,3649631590)
(12000000000,3923492482)
(13000000000,4192822368)
(14000000000,4460980446)
(15000000000,4725535640)
(16000000000,4984845762)
(17000000000,5245558722)
(18000000000,5499171371)
(19000000000,5753314279)
(20000000000,6003318132)
};

\addplot[red, dashed, domain=1e9:2e10, variable=n] {n^.833333333333*15.63866567};
\legend{Data points, $15.63866567*n^{\frac{5}{6}}$ reference}

\end{loglogaxis}
\end{tikzpicture}
\caption{We use our code to create a plot of defects of squares of size $1*10^9-2*10^{10}$ in increments of $10^9$.}

\label{fig:defectplot}

\end{figure}
\FloatBarrier
\section{Concluding remarks}
We analyze strip tilings and apply them to lower the upper bound on the defect of the Mondrian Art Problem. We proved an upper bound of $n^\frac{5}{6}$ for large $n$. Then we show an algorithm that achieves this upper bound.

While trying to find optimal Mondrian partitions we have observed that often the limiting factor is the largest shared face (either two rectangles next to each other or two rectangles that share one of the same dimensions). Face-sharing inherently limits the size of tiles you can use by forcing shared dimensions. With less or no face-sharing, we can use more distinct factorizations of tilings than with face-sharing. Finally, for a Mondrian partition to have defect zero, it could not have any face-sharing because two tiles that share one face and have the same area must have the same dimension, which is disallowed. This leads us to two questions. \begin{enumerate}
    \item Can you get a better upper bound if face-sharing is disallowed?
    \item How can one create families of these partitions?
\end{enumerate}

Another open question we have is:
Are there more optimal algorithms that give a better upper bound on the defect?
\appendix
\section{}

\begin{algorithm}
\SetAlgoLined
\KwIn{Parameter $L \in \mathbb{N}$; Target value $N$}
\KwOut{A subset $S \subseteq [L]$ such that $\sum_{x \in S} x = N$}
\DontPrintSemicolon
\textbf{Precondition:} $1 \leq N \leq \frac{L(L+1)}{2}$\;
\textbf{Runtime:} $O(L)$\;
$S \gets \emptyset$\;
\For{$i \gets L$ \textbf{downto} $1$}{
    \If{$i + \sum S \leq N$}{
        $S \gets S \cup \{i\}$\;
    }
}
\Return{$S$}\;
\caption{\textsc{BasicSubsetSum}}
\end{algorithm}

The correctness of \textsc{BasicSubsetSum} is straightforward.

\begin{algorithm}
\SetAlgoLined
\KwIn{Parameters $m, L \in \mathbb{N}$; Target value $N$}
\KwOut{A subset $S \subseteq [m - L, m + L] \cap \mathbb{N}$ such that $\sum_{x \in S} x = N$}
\DontPrintSemicolon
\SetKwProg{Fn}{def}{}{}

\textbf{Precondition:} $2m\sqrt{m} < N < 2m(L - \sqrt{2m})$ and $L < m$\;

\Fn{\normalfont \textsc{RangeSubsetSum}$(m, L, N)$}{
    $k, r \leftarrow \text{divmod}(N,m)$ \tcp*{$N = km + r$, $0 \leq r < m$}
    $S^+ \leftarrow \textsc{BasicSubsetSum}(\sqrt{2m}, \text{target}=r)$\;
    $S^- \leftarrow \emptyset$\;
    $w \leftarrow \left\lceil\sqrt{2m}\right\rceil + 1$\;
    \If{$k - |S^+|$ \normalfont{is odd}}{
        $S^- \leftarrow S^- \cup \{m\}$\;
    }
    \While{$|S^-| < k - |S^+|$}{
        $S^- \leftarrow S^- \cup \{m-w, m+w\}$\;
        $w \leftarrow w + 1$\;
    }
    \Return{$\{m+x: x \in S^+\} \cup S^-$}\;
}
\caption{\textsc{RangeSubsetSum}}
\end{algorithm}

To show correctness of \textsc{RangeSubsetSum}, note that $\sum S^- = m \cdot |S^-|$, an invariant we maintain at every iteration. Further,\\
$|S^-| = k - |S^+|$, so $\sum S^- = m(k - |S^+|)$.

Now,
\begin{align*}
\sum S &= \sum_{x \in S^+} (m+x) + m(k - |S^+|)\\
&= m|S^+| + \sum S^+ + m(k - |S^+|)\\
&= mk + \sum S^+ = mk + r = N.
\end{align*}
\newpage
\bibliographystyle{amsplain}

\bibliography{mondrian1}

\end{document}